\theoremstyle{oupplain}
\newtheorem{theorem}{Theorem}[section]
\newtheorem{lemma}[theorem]{Lemma}
\newtheorem{proposition}[theorem]{Proposition}
\newtheorem{definition}[theorem]{Definition}
\theoremstyle{oupremark}
\newtheorem{remark}[theorem]{Remark}
\newtheorem{example}[theorem]{Example}
\newtheorem{question}[theorem]{Question}
\theoremstyle{oupproof}
\newtheorem{proof}{Proof}
\numberwithin{equation}{section}
\begin{document}

\begin{Frontmatter}

\title[Nuclear Toeplitz Operators between Fock Spaces]{Nuclear Toeplitz Operators between Fock Spaces}

\author{Tengfei Ma}
\author{Yufeng Lu}
\author{Chao Zu\thanks{Corresponding author.}}

\address{\orgname{School of Mathematics Sciences, Dalian University of Technology}, \orgaddress{\street{Dalian}, \state{Liaoning}, \postcode{116024, P. R. China}}\email{503304283@qq.com}, \email{lyfdlut@dlut.edu.cn}, \email{zuchao@dlut.edu.cn}}

\keywords[2020 Mathematics Subject Classification]{Primary 47B10 Secondary 47B35}

\keywords{Nuclear operator, Toeplitz operator, Fock space}

\abstract{We characterize the nuclearity of Toeplitz operators $T_\mu: F_\alpha^p \to F_\alpha^q$
with Borel measure symbols for $1\leq p,q\leq \infty$. For positive measures $\mu$ and $q\leq p$, we provide necessary and sufficient conditions in terms of the Berezin transform and establish a rigidity property for nuclearity across this range. In the case $p<q$, we obtain separate necessary and sufficient conditions, indicating that the Berezin transform alone is insufficient for a complete characterization. Our results extend to Fock spaces on $\mathbb{C}^n$.}

\end{Frontmatter}
\section{Introduction}
Toeplitz operators on Fock spaces constitute a fundamental class of non-selfadjoint operators in complex analysis and operator theory and have attracted a great deal of interest. The theory of Toeplitz operators
on Fock spaces has been extensively developed in the Hilbert space setting. For the
classical Fock space $F_\alpha^2$,  Toeplitz operators with bounded or measure-valued symbols
have been studied from various perspectives, including boundedness, compactness,
and Schatten class membership, see references \cite{MR2610379,MR882716,MR1277446,MR4107813}.  In particular,
Isralowitz and Zhu \cite{MR2609242} obtained complete characterizations of boundedness, compactness, and Schatten class properties of Toeplitz operators with positive Borel measure symbols.

Beyond the Hilbert space situation, Hu and Lv \cite{MR2819157,MR3248473} systematically investigated Toeplitz operators acting between different Fock spaces $F_{\alpha}^{p}$ and $F_{\alpha}^{q}$ for $0< p,q < \infty$. They characterized bounded and compact Toeplitz operators with positive Borel measure symbols in terms of averaging functions and Berezin-type transforms. A notable feature of their results is a rigidity phenomenon specific to Fock spaces: boundedness (or compactness) from $F_{\alpha}^{p}$ to $F_{\alpha}^{q}$ for some $p \leq q$ automatically implies boundedness (or compactness) for all such $q$.  The groundbreaking work of Hu and Lv has stimulated extensive scholarly investigations into Toeplitz and Hankel operators across diverse weighted Fock spaces \cite{MR3392258,MR3157978,MR2934601}.

In contrast to the well-developed theory of bounded and compact Toeplitz
operators, considerably less is known about Toeplitz operators belonging to finer
operator ideals on Fock spaces, especially in the non-Hilbertian case. Among these
ideals, the class of nuclear operators plays a distinguished role in Banach space
operator theory.  The aim of this paper is to study nuclear Toeplitz operators acting between Fock spaces.  Our first main result is presented as follows:

\begin{theorem}\label{th1.1}
        Let $1\leq q\leq p<\infty, 1\leq s\leq\infty$, and let $\mu\geq0$ satisfy (\ref{eq2}); then the following statements are equivalent
    \begin{enumerate}
    \item[(1)] $T_\mu\in\mathcal{N}(F_\alpha^p,F_\alpha^q)$;
    \item[(2)] $T_\mu\in\mathcal{N}(f_\alpha^\infty,F_\alpha^s)$;
    \item[(3)] $\mu(\mathbb{C})<\infty$.
\end{enumerate}
Moreover,
\begin{equation}\label{eq1}
    \|T_\mu\|_{\mathcal{N}}\simeq\mu(\mathbb{C}).
\end{equation}
    \end{theorem}
Since nuclear and trace-class operators coincide on Hilbert spaces, the case $p=q=2$ in our results recovers Zhu’s characterization of trace‑class Toeplitz operators in \cite{MR2609242}. Notice that Zhu's proof relies heavily on Hilbert space structure, including
orthogonal decompositions, positivity, and the classical Schatten class theory. In contrast, when $p \neq 2$, the Fock space $F^p_\alpha$ is a Banach space lacking Hilbert geometry.  The standard techniques used in the Hilbert case do not extend directly. Thus, the characterization of nuclear Toeplitz operators between Banach Fock spaces requires a different approach based on operator ideals. We emphasize that our method provides a more constructive approach to approximating $T_\mu$ with nuclear operators compared to Zhu's proof.

Our main results provide a detailed description of nuclear Toeplitz operators in different parameter regimes. When $ 1\leq q\leq p<\infty $ and $ \mu\geq 0 $, we obtain necessary and sufficient conditions for $ T_{\mu} $ to be nuclear in terms of the integrability of the Berezin transform of $ \mu $. As a consequence, we show that nuclearity in this range exhibits a rigidity property analogous to that observed for boundedness: if $ T_{\mu} $ is nuclear from $ F_{\alpha}^{p} $ to $ F_{\alpha}^{q} $ for some $ q\leq p $, then it is nuclear for all such $ q $.

When $ p<q $, the situation becomes more subtle. In this case, we establish separate necessary and sufficient conditions for nuclearity, showing that the Berezin transform alone does not yield a complete characterization. This reflects an inherent asymmetry between the domain and target spaces in the nuclear setting and distinguishes the nuclear theory from previously studied mapping properties of Toeplitz operators on Fock spaces. We will discuss this problem in Chapter 3.

Berger and Coburn \cite{MR1277446} showed that the set of all Toeplitz operators $T_\varphi$ with compact support continuous symbol is trace-norm dense in the trace ideal of $F_\alpha^2$, and is operator-norm dense in $\mathcal{K}(F_\alpha^2)$. Our second main result generalizes the aforementioned finding :
\begin{theorem}
    \label{th1.2}
    Let $C$ denote the set of all Toeplitz operators $T_{\varphi}$ with continuous, compactly supported symbols $\varphi$ on $\mathbb{C}$, and let $1<p,q<\infty$. Then:
\begin{enumerate}
    \item [(1)] $C$ is dense in $\mathcal{N}(F^{p}_{\alpha},F^{q}_{\alpha})$ with respect to the nuclear norm.
    \item[(2)] $C$ is norm-dense in $\mathcal{K}(F^{p}_{\alpha},F^{q}_{\alpha})$.
\end{enumerate}
\end{theorem}

The proofs rely on tools from Banach space operator theory, including the duality properties of nuclear operators, approximation and Radon-Nikodym properties of Fock spaces, which we will present in Section 2.

The paper is organized as follows. In Section 2, we recall basic definitions and preliminary results concerning nuclear operators and Fock spaces. Section 3 is devoted to the characterization of nuclear Toeplitz operators between Fock spaces and contains our main results. In Section 4, we present several trace formulas and establish the density theorem.

\section{Preliminary}
\subsection{Fock Space}

Let $ \alpha>0 $ and $ p>0 $, the notation $ L^{p}_{\alpha} $ refers to the space of all Lebesgue measurable functions $ f $ on $ \mathbb{C} $ for which $ f(z)e^{-\alpha|z|^{2}/2}\in L^{p}(\mathbb{C},\mathrm{d}A) $, where $ \mathrm{d}A $ is the Lebesgue area measure on $ \mathbb{C} $. For $ f\in L^{p}_{\alpha} $, its norm is given by

$$ \|f\|_{\alpha,p}^{p}=\frac{p\alpha}{2\pi}\int_{\mathbb{C}}\left|f(z)e^{-\frac{\alpha}{2}|z|^{2}}\right|^{p}dA(z). $$
 Correspondingly, for $ \alpha>0 $ and $ p=\infty $, we denote by $ L_{\alpha}^{\infty} $ the space of measurable functions $ f $ such that

$$ \|f\|_{\infty,\alpha}=\operatorname*{ess\, sup}\left\{|f(z)|e^{-\alpha|z|^{2}/2}:z\in\mathbb{C}\right\}<\infty. $$
The Fock space $ F_{\alpha}^{p} $ is defined as the subspace of $ L_{\alpha}^{p} $ consisting of entire functions. It is straightforward to verify that $ F_{\alpha}^{p} $ becomes a Banach space when $ 1\leq p\leq\infty $. Let $ f_{\alpha}^{\infty} $ be the closed subspace of $ F_{\alpha}^{\infty} $ consisting of $f$ satisfying

$$ \lim_{|z|\to\infty}f(z)e^{-\frac{\alpha}{2}|z|^{2}}=0. $$
When $ p=2 $ , the space $ F_{\alpha}^{2} $ is a Hilbert space equipped with the inner product

$$ \langle f,g\rangle_{F_{\alpha}^{2}}=\frac{\alpha}{\pi}\int_{\mathbb{C}}f(z)\overline{g(z)}e^{-\alpha|z|^{2}}\,dA(z). $$
 As a reproducing kernel Hilbert space, its reproducing kernel is $ K_{\alpha}(z,w)=e^{\alpha z\overline{w}} $. The normalized reproducing kernel at z is denoted by $ k_{z}(w)=e^{\alpha\overline{z}w-\frac{\alpha}{2}|z|^{2}} $ , which is a unit vector in $ F_{\alpha}^{p} $ for every $ 0<p\leq\infty $ .
For $ 1\leq p<\infty $ , let $ p^{\prime} $ be the conjugate exponent satisfying $ \frac{1}{p}+\frac{1}{p^{\prime}}=1 $. We can identify the dual of $ F_{\alpha}^{p} $ with $ F_{\alpha}^{p^{\prime}} $ , and the dual of $ f_{\alpha}^{\infty} $ with $ F_{\alpha}^{1} $ , via the integral pairing
$$ \langle f,g\rangle_{F_{\alpha}^{2}}=\frac{\alpha}{\pi}\int_{\mathbb{C}}f(z)\overline{g(z)}e^{-\alpha|z|^{2}}\,dA(z), $$
 and under this identification, the norms on the dual spaces and the Fock spaces are equivalent (see \cite[Corollary 2.25, Theorem 2.26]{MR2934601}).

Given $ \alpha>0 $ , consider the Gaussian measure

$$ d\lambda_{\alpha}(z)=\frac{\alpha}{\pi}e^{-\alpha|z|^{2}}\,dA(z). $$
 Define the integral operator $ P_{\alpha} $ on $ L_{\alpha}^{p} $ by

$$ P_{\alpha}f(z)=\int_{\mathbb{C}}K_{\alpha}(z,w)f(w)\,d\lambda_{\alpha}(w),\quad f\in L_{\alpha}^{p}. $$
 It is known that $ P_{\alpha}:L_{\alpha}^{p}\to F_{\alpha}^{p} $ is bounded for all $ p $ , and it acts as the identity on $ F_{\alpha}^{p} $ , that is, $ P_{\alpha}f=f $ for $ f\in F_{\alpha}^{p} $ (see \cite[ Corollary 2.22]{MR2934601}).

For a function $ \varphi\in L^{\infty}(\mathbb{C}) $ , the associated Toeplitz operator $ T_{\varphi}:F_{\alpha}^{2}\to F_{\alpha}^{2} $ is defined by

$$ T_{\varphi}(f)=P_{\alpha}(\varphi f),\quad f\in F_{\alpha}^{2}. $$
 More generally, if $ \mu $ is a complex Borel measure on $ \mathbb{C} $ , the Toeplitz operator $ T_{\mu} $ is given by

$$ T_{\mu}(f)(z)=\frac{\alpha}{\pi}\int_{\mathbb{C}}K_{\alpha}(z,w)f(w)e^{-\alpha|w|^{2}}\,d\mu(w),\quad z\in\mathbb{C}, $$
 provided that the integrability condition
\begin{equation}
     \int_{\mathbb{C}}|K_{\alpha}(z,w)|e^{-\alpha|w|^{2}}\,d|\mu|(w)<\infty\label{eq2}
\end{equation}
 holds for every $ z\in\mathbb{C} $. Let
\begin{equation}
    \label{eq3}
    K=\operatorname{span}\{K_{z}:z\in\mathbb{C}\}.
\end{equation}
The set $K$ is dense in $ F_{\alpha}^{p} $ for every $ 1\leq p<\infty $ and also in $ f_{\alpha}^{\infty} $. Under assumption \eqref{eq2}, the operator $ T_{\mu} $ is well-defined on $K$. For a measure $ \mu $ satisfying \eqref{eq2}, its Berezin transform is defined as
\begin{equation}
    \label{eq4}
     \widetilde{\mu}(z)=\frac{\alpha}{\pi}\int_{\mathbb{C}}|k_{z}(w)|^{2}\mathrm{e}^{-\alpha|w|^{2}}\,d\mu(w)=\frac{\alpha}{\pi}\int_{\mathbb{C}}\mathrm{e}^{-\alpha|z-w|^{2}}\,d\mu(w).   
\end{equation}

\subsection{Nuclear Operator}

Let $X$ and $Y$ be complex Banach spaces. The Banach space consisting of all bounded linear operators from $X$ to $Y$ is represented by $\mathcal{L}(X,Y)$. The dual space of a Banach space $X$ is denoted by $X'$. For $x\in X$ and $x'\in X'$, we write $\langle x,x'\rangle$ for the dual pairing $x'(x)$. We say that an operator $T\in\mathcal{L}(X,Y)$ is nuclear provided that it admits a representation of the form

\begin{equation}\label{eq5}
T=\sum_{j=1}^{\infty}x_{j}'\otimes y_{j},
\end{equation}
for sequences $(x_{j}')\subset X'$ and $(y_{j})\subset Y$ satisfying $\sum_{j=1}^{\infty}\|x_{j}'\|_{X'}\|y_{j}\|_{Y}<\infty.$ For $x'\in X'$ and $y\in Y$, we denote by $x'\otimes y$  the rank-one operator from $X$ to $Y$ defined by
$$ x'\otimes y(x)=\langle x,x'\rangle y,\quad x\in X. $$
The nuclear norm of $T$ is given by

$$ \|T\|_{N}:=\inf\sum_{i=1}^{\infty}\|x_{i}^{\prime}\|_{X'}\|y_{i}\|_{Y}, $$
where the infimum in this definition is taken over all representations of the type \eqref{eq5}. We write $\mathcal{N}(X,Y)$ for the class of all nuclear operators mapping $X$ into $Y$. Equipped with the norm $\|\cdot\|_{N}$, it becomes a Banach space. In the case where $E$ and $F$ coincide, we write $\mathcal{N}(E)$ for $\mathcal{N}(E,E)$. 

A Banach space $E$ is said to have the approximation property if for every compact set $K\subset E$ and every $\varepsilon>0$, there exists a finite-rank operator $L: E\to E$ such that
$$ \|x-Lx\|<\epsilon\quad\text{for all }x\in K. $$
For a Banach space $E$ with the approximation property, the nuclear trace of an operator $T\in\mathcal{N}(E)$ can be defined by

$$ \mathrm{Tr}_{E}(T)=\sum_{j=1}^{\infty}\langle x_{j},x_{j}^\prime\rangle, $$
where $T=\sum_{j=1}^{\infty}x_{j}^{\prime}\otimes x_{j}$ is any representation as in \eqref{eq5}. It follows from \cite{MR1744872} Theorem \uppercase\expandafter{\romannumeral5}.1.2,  that the value of the trace does not depend on the particular representation
chosen. Standard references for the general theory of operator ideals include \cite{MR1209438,MR1342297,MR1744872,MR2300779}.

\begin{definition}
\label{de2.1}
Let $X$ be a Banach space, and let $(\Omega,\Sigma,\mu)$ be any finite measure space, if  for every $\mu$-continuous vector measure $G:\Sigma\to X$ of bounded variation, we can find a function $g\in L^{1}(\mu,X)$ for which

$$ G(E)=\int_{\Omega}g\,d\mu\quad\text{holds for all }E\in\Sigma, $$
we say that $X$ has the Radon-Nikodym property.

\end{definition}

\begin{proposition}\label{pro2.2}
For $1<p<\infty$, the Fock space $F_{\alpha}^{p}$ is a Banach space that enjoys both the approximation property and the Radon-Nikodym property.
\end{proposition}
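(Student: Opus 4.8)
The plan is to obtain each of the two properties from a structural fact that is already available for $F_\alpha^p$: the approximation property from the existence of a Schauder basis, and the Radon--Nikod\'ym property from reflexivity. First I would record the approximation property. Since the monomials $\{e_n\}_{n=0}^\infty$ form a Schauder basis of $F_\alpha^p$ for $1<p<\infty$, I would invoke the standard principle that every Banach space with a Schauder basis has the approximation property. Concretely, let $S_N$ be the $N$-th partial-sum projection $S_N f=\sum_{n=0}^N c_n(f)\,e_n$, where the $c_n$ are the coordinate functionals. Each $S_N$ has finite rank, the family $\{S_N\}$ is uniformly bounded by the basis constant (via the uniform boundedness principle), and $S_N f\to f$ for every $f\in F_\alpha^p$. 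Given a compact set $K\subset F_\alpha^p$ and $\epsilon>0$, the combination of uniform boundedness and pointwise convergence upgrades to uniform convergence on $K$ by a routine finite-net argument, so $L=S_N$ with $N$ large is the required finite-rank approximant.

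Next I would establish reflexivity of $F_\alpha^p$ for $1<p<\infty$. The map $f\mapsto f(z)\,\mathrm{e}^{-\frac{\alpha}{2}|z|^2}$ is an isometry of $L_\alpha^p$ onto $L^p\!\left(\mathbb{C},\tfrac{p\alpha}{2\pi}\,dA\right)$, which is reflexive since $1<p<\infty$. As $F_\alpha^p$ is a closed subspace of $L_\alpha^p$ (it is a Banach space in the inherited norm), and closed subspaces of reflexive spaces are reflexive, $F_\alpha^p$ is reflexive. Alternatively, reflexivity can be read off directly from the duality identifications recalled in the Introduction: $(F_\alpha^p)'=F_\alpha^{p'}$ and $(F_\alpha^{p'})'=F_\alpha^p$ with $1<p'<\infty$, so the canonical embedding into the bidual is onto.

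Finally I would apply the classical theorem that every reflexive Banach space has the Radon--Nikod\'ym property (see, e.g., Diestel--Uhl), which together with the previous two paragraphs yields the proposition. As for the main obstacle: there is essentially no deep difficulty here, since the statement is a packaging of standard Banach-space facts. The only point requiring mild care is the passage from pointwise to uniform convergence of the partial-sum projections on compact sets in the approximation-property argument; everything else reduces to citing the Schauder-basis, reflexivity, and Radon--Nikod\'ym theorems and verifying their structural hypotheses, namely that $\{e_n\}$ is a basis and that $F_\alpha^p$ is a closed subspace of an $L^p$ space.
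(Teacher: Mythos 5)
Your proof is correct and follows essentially the same route as the paper: the approximation property is deduced from the Schauder basis $\{e_n\}$, and the Radon--Nikod\'ym property from reflexivity of $F_\alpha^p$ for $1<p<\infty$. The paper simply cites references for these two implications, while you fill in the standard arguments (partial-sum projections with a finite-net argument, and reflexivity via the isometric embedding into $L^p$ or via the duality $(F_\alpha^p)'=F_\alpha^{p'}$), all of which are sound.
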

\begin{proof}
    The approximation property follows from  \cite[Proposition 2.1]{MR4107813}. The Radon-Nikodym property holds because $F_\alpha^p$ is reflexive(see \cite[Corollary 3.13]{MR453964}).
\end{proof}

Let $X$ be a Banach space, if $x\in X$, define $x^{\prime\prime}: X^\prime\rightarrow\mathbb{C}$ by
\begin{equation*}
    x^{\prime\prime}(x^\prime)=x^\prime(x)
\end{equation*}
for every $x^\prime\in X^\prime$. The map $\tau: X\rightarrow X^{\prime\prime}$ defined by $\tau(x)=x^{\prime\prime}$ for all $x\in X$ is called the canonical map. Recall that $\tau$ is an isometry from $X$ onto $X^{\prime\prime}$ when $X$ is reflexive.

\begin{theorem}\label{th2.3}
 Let X and Y be Banach spaces such that Y is reflexive. The adjoint map $T\rightarrow T^{*}$ is an isometry from $\mathcal{N}(X,Y)$ onto $\mathcal{N}(Y^{\prime},X ^{\prime})$.
\end{theorem}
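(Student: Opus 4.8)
The plan is to reduce everything to the behaviour of rank-one operators under the adjoint operation and then lift to nuclear series by absolute convergence. First I would record the basic identity: for $x'\in X'$ and $y\in Y$, the rank-one operator $x'\otimes y:X\to Y$ has adjoint $(x'\otimes y)^*:Y'\to X'$ given by
\[
(x'\otimes y)^* = \tau(y)\otimes x',
\]
where $\tau:Y\to Y''=(Y')'$ is the canonical map. Indeed, for $x\in X$ and $y'\in Y'$ one computes $\langle x,(x'\otimes y)^*y'\rangle = \langle (x'\otimes y)x,y'\rangle = \langle x,x'\rangle\langle y,y'\rangle$, so that $(x'\otimes y)^*y' = \langle y,y'\rangle\, x' = \langle y',\tau(y)\rangle\, x'$, which is precisely $(\tau(y)\otimes x')(y')$.

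Next I would treat the ``into'' direction. Given $T\in\mathcal N(X,Y)$ with a nuclear representation $T=\sum_j x_j'\otimes y_j$ satisfying $\sum_j\|x_j'\|\,\|y_j\|<\infty$, the series converges absolutely in operator norm; since $S\mapsto S^*$ is an isometry for the operator norm, one may take adjoints termwise to obtain
\[
T^* = \sum_j \tau(y_j)\otimes x_j'.
\]
Because the canonical embedding $\tau$ is an isometry into the bidual for \emph{every} Banach space, $\|\tau(y_j)\|_{Y''}=\|y_j\|_Y$, so this is a nuclear representation of $T^*$ with $\sum_j\|\tau(y_j)\|\,\|x_j'\| = \sum_j\|y_j\|\,\|x_j'\|$. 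Hence $T^*\in\mathcal N(Y',X')$, and taking the infimum over representations of $T$ gives $\|T^*\|_{\mathcal N}\le\|T\|_{\mathcal N}$. Note that reflexivity has not yet been used.

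Finally I would establish surjectivity together with the reverse norm inequality, which is where reflexivity of $Y$ enters essentially. Let $S\in\mathcal N(Y',X')$ and fix a nuclear representation $S=\sum_j \eta_j\otimes x_j'$ with $\eta_j\in Y''=(Y')'$, $x_j'\in X'$, and $\sum_j\|\eta_j\|\,\|x_j'\|<\infty$. Since $Y$ is reflexive, $\tau$ is onto and isometric, so each $\eta_j=\tau(y_j)$ for a unique $y_j\in Y$ with $\|y_j\|=\|\eta_j\|$. Setting $T=\sum_j x_j'\otimes y_j$ defines an element of $\mathcal N(X,Y)$ with $\sum_j\|x_j'\|\,\|y_j\|=\sum_j\|x_j'\|\,\|\eta_j\|<\infty$, and by the termwise adjoint identity $T^*=\sum_j\tau(y_j)\otimes x_j'=S$. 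This proves the map is onto; moreover, optimizing over representations of $S$ (the preimage $T$ being the same for each, since the adjoint map is injective), $\|T\|_{\mathcal N}\le\|S\|_{\mathcal N}=\|T^*\|_{\mathcal N}$. Combined with the inequality from the previous step, this yields $\|T^*\|_{\mathcal N}=\|T\|_{\mathcal N}$, so $T\mapsto T^*$ is an isometry onto $\mathcal N(Y',X')$; injectivity is automatic since $T^*=0$ forces $T=0$ by Hahn--Banach. The main obstacle---indeed the only place the hypothesis is used---is the passage from a representation of $S$ back to one of $T$: the functionals $\eta_j$ a priori live in $Y''$, and only the surjectivity of $\tau_Y$, that is, the reflexivity of $Y$, allows them to be realized as elements of $Y$ itself.
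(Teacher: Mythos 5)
Your proof is correct and follows essentially the same route as the paper: compute the adjoint of rank-one operators, take adjoints termwise to get $\|T^{*}\|_{\mathcal{N}}\le\|T\|_{\mathcal{N}}$, and use reflexivity of $Y$ to lift any nuclear representation of $S\in\mathcal{N}(Y^{\prime},X^{\prime})$ back to one in $\mathcal{N}(X,Y)$, yielding surjectivity and the reverse inequality. Your explicit appeal to injectivity of the adjoint map (via Hahn--Banach) to see that all lifted preimages coincide is a detail the paper dismisses as ``easy to check,'' but it is the same argument.
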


\begin{proof}
 Let $T=\sum_{j=1}^{\infty}x_{j}^\prime \otimes y_{j} \in \mathcal{N}(X,Y)$, where $(x_{j}^\prime)\in X^\prime, (y_j)\in Y$.  For any $y^\prime \in Y^{'}, x \in X$, we have
    \begin{align*}
        T^{*}(y^\prime)(x)&=y^\prime(Tx)\\
       & =y^\prime(\sum_{j=1}^{\infty}x_{j}^\prime(x)y_{j})\\
       &=\sum_{j=1}^{\infty}x_{j}^\prime(x)y^\prime(y_{j})   \\                             &=\sum_{j=1}^{\infty}y_{j}^{\prime\prime}(y^\prime)x_{j}^\prime(x).
    \end{align*}
     Then $T^{*}=\sum_{j=1}^{\infty}y_{j}^{\prime\prime}\otimes x_{j}^\prime$, and $\|T^{*}\|_{\mathcal{N}}\leq\sum_{j=1}^{\infty}\|y_{j}^{\prime\prime}\|_{Y^{\prime\prime}}\|x_{j}^\prime\|_{X^{\prime}}=\sum_{j=1}^{\infty}\|y_{j}\|_{Y}\|x_{j}^\prime\|_{X^{\prime}}$. Taking the infimum over all possible representations (\ref{eq5}), we obtain $\|T^{*}\|_{\mathcal{N}}\leq\|T\|_{\mathcal{N}}$. On the other hand, let $K\in\mathcal{N}(Y^{\prime},X^{\prime})$. Since $Y$ is reflexive, there exist sequences $(a_j^\prime)\in X^\prime$ and $(b_j)\in Y$ such that $K=\sum_{j=1}^{\infty}b_{j}^{\prime\prime}\otimes a_{j}^\prime$. Let $L=\sum_{j=1}^{\infty}a_{j}^\prime\otimes b_j\in\mathcal{N}(X,Y)$. It is easy to check that $L^*=K$ and $\|L\|_\mathcal{N}\leq\|K\|_\mathcal{N}$. This proves the desired result.
\end{proof}

For the dual space of compact operators and nuclear operators, we have the following characterization.


\begin{proposition}\label{pro2.4}\upshape(\cite[Proposition 16.7]{MR1209438})
Let \( E \) and \( F \) be Banach spaces such that \( E^{\prime \prime} \) or \( F^{\prime} \) has the approximation property and: \( E^{\prime \prime} \) or \( F^{\prime} \) has the Radon-Nikodým property. Then
\begin{align*}
    (\mathcal{K}(E, F))^{\prime}    &= \mathcal{N}\left(E^{\prime}, F^{\prime}\right)                              && \text{isometrically}, \\
    (\mathcal{K}(E, F))^{\prime \prime} &= \left(\mathcal{N}\left(E^{\prime}, F^{\prime}\right)\right)^{\prime} 
        = \mathcal{L}\left(E^{\prime \prime}, F^{\prime \prime}\right)  && \text{isometrically}. 
\end{align*}
The duality brackets are given by
\begin{align*}
    \langle S, T\rangle_{\mathcal{K}, \mathcal{N}} &= \sum_{n=1}^{\infty} \left\langle S^{*} y_{n}^{\prime}, x_{n}^{\prime \prime} \right\rangle_{E^{\prime}, E^{\prime \prime}} \\
    \langle T, S\rangle_{\mathcal{N}, \mathcal{L}} &= \sum_{n=1}^{\infty} \left\langle y_{n}^{\prime}, S x_{n}^{\prime \prime} \right\rangle_{E^{\prime}, E^{\prime \prime}}
\end{align*}
if \( T = \sum_{n=1}^{\infty} x_{n}^{\prime \prime} \otimes y_{n}^{\prime} \in \mathcal{N}\left(E^{\prime}, F^{\prime}\right) \).
\end{proposition}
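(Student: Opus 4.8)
The plan is to recognize both displayed identities as instances of tensor-norm duality and to reduce everything to three isometric identifications, each controlled by one of the two standing hypotheses. Write $E'\widehat{\otimes}_\varepsilon F$ for the completion of the algebraic tensor product $E'\otimes F$ under the injective norm, and $E''\widehat{\otimes}_\pi F'$ for the completion of $E''\otimes F'$ under the projective norm. The embedding $x'\otimes y\mapsto(x\mapsto\langle x,x'\rangle y)$ identifies $E'\widehat{\otimes}_\varepsilon F$ isometrically with the operator-norm closure of the finite-rank operators, i.e. the approximable operators (a short Goldstine argument shows the injective norm of $u$ equals the operator norm of the induced map $E\to F$). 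Since the approximation property of $E''$ descends to $E'$, and that of $F'$ descends to $F$, under our hypotheses every compact operator is approximable, so $\mathcal{K}(E,F)=E'\widehat{\otimes}_\varepsilon F$ isometrically. Dually, $\mathcal{N}(E',F')$ is by definition the range of the canonical map $E''\widehat{\otimes}_\pi F'\to\mathcal{L}(E',F')$ with its quotient norm, and the approximation property of $E''$ or $F'$ forces this map to be injective, so $\mathcal{N}(E',F')=E''\widehat{\otimes}_\pi F'$ isometrically.

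Granting these, the first assertion is the tensor-norm duality
\[
(E'\widehat{\otimes}_\varepsilon F)'=E''\widehat{\otimes}_\pi F',
\]
which I would prove in two moves. First, the dual of an injective tensor product is always the space of integral operators, $(E'\widehat{\otimes}_\varepsilon F)'=\mathcal{I}(E',F')$ with the integral norm dual to $\varepsilon$; this is a soft computation with bilinear forms requiring no side conditions. The decisive move then upgrades integral operators to nuclear ones, $\mathcal{I}(E',F')=\mathcal{N}(E',F')$ isometrically, and this is exactly where the Radon--Nikod\'ym property is consumed: when $E''$ or $F'$ has the RNP, an integral operator factors through an $L^1$-space in the Lewis--Stegall manner and the attendant vector measure, being representable by a Bochner-integrable density, produces an absolutely summable (hence nuclear) representation, while the approximation property keeps the identification isometric rather than merely isomorphic. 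I expect this integral-equals-nuclear step to be the main obstacle, since it is the only place where genuine vector-measure theory, rather than formal tensor manipulation, is needed.

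Finally, the second line of the statement follows by dualizing once more: the dual of a projective tensor product is the space of bounded bilinear forms with no side conditions, so
\[
(\mathcal{N}(E',F'))'=(E''\widehat{\otimes}_\pi F')'=\mathcal{L}\bigl(E'',(F')'\bigr)=\mathcal{L}(E'',F''),
\]
and composing with the first identity gives $(\mathcal{K}(E,F))''=\mathcal{L}(E'',F'')$. It then remains to check that the abstract tensor pairings reduce to the concrete trace pairings recorded in the statement. Evaluating the functional attached to $T=\sum_n x_n''\otimes y_n'$ on a rank-one operator $S=x'\otimes y$ gives $\sum_n\langle x',x_n''\rangle\langle y,y_n'\rangle$, and since $S^{*}y_n'=\langle y,y_n'\rangle\,x'$ this equals $\sum_n\langle S^{*}y_n',x_n''\rangle_{E',E''}$; extending by linearity and $\|\cdot\|_{\mathcal{N}}$-continuity over the dense finite-rank operators yields $\langle S,T\rangle_{\mathcal{K},\mathcal{N}}=\sum_n\langle S^{*}y_n',x_n''\rangle$ for all $S\in\mathcal{K}(E,F)$, and the $\langle\cdot,\cdot\rangle_{\mathcal{N},\mathcal{L}}$ pairing is verified in the same way with $S\in\mathcal{L}(E'',F'')$.
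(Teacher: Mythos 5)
The paper never proves this proposition; it is quoted verbatim from Defant--Floret \cite[Proposition 16.7]{label5}, so there is no internal argument to compare against. Your reconstruction follows the same route as that source: identify $\mathcal{K}(E,F)$ with $E'\widehat{\otimes}_\varepsilon F$ (using that AP of $E''$ or of $F'$ descends to $E'$ or to $F$, so compacts are approximable), identify $\mathcal{N}(E',F')$ with $E''\widehat{\otimes}_\pi F'$ (using AP of $E''$ or $F'$ for injectivity of the canonical map), and then run the injective/projective duality through integral operators, with the Radon--Nikod\'ym property upgrading integral to nuclear. The final dualization $(E''\widehat{\otimes}_\pi F')'=\mathcal{L}(E'',F'')$ and the verification of the trace pairings on rank-one operators are correct as written (modulo one slip: the extension from finite-rank operators should invoke operator-norm continuity of the functional, since finite-rank operators are dense in $\mathcal{K}(E,F)$ in the operator norm, not nuclear-norm continuity).

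The one genuine soft spot is the step $\mathcal{I}(E',F')=\mathcal{N}(E',F')$. The Lewis--Stegall argument you invoke (factor the integral operator through $L^\infty\to L^1$, then use representability of the $L^1$-leg to factor it through $\ell^1$) consumes the RNP of the \emph{target} of the integral operator, i.e.\ of $F'$. But the hypothesis only guarantees that $E''$ \emph{or} $F'$ has the RNP, and in the case where $E''$ has it while $F'$ does not, your argument as stated does not apply: there is no way to feed RNP of the dual of the domain into that factorization. The repair is the symmetry of integral bilinear forms: an element of $(E'\widehat{\otimes}_\varepsilon F)'$ is an integral form on $E'\times F$, which can equally be read as an integral operator $F\to E''$; when $E''$ has the RNP, Lewis--Stegall applies to \emph{that} operator, and since nuclearity of the form (membership in $E''\widehat{\otimes}_\pi F'$) is symmetric in the two arguments, one lands in the same space with the same norm. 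Relatedly, your parenthetical claim that ``the approximation property keeps the identification isometric'' misplaces the role of AP: the isometry of the integral and nuclear norms under RNP comes out of the factorization itself, whereas AP is what makes $E''\widehat{\otimes}_\pi F'\to\mathcal{N}(E',F')$ injective (hence isometric) --- a fact you had already used, and without which the second duality $(\mathcal{N}(E',F'))'=\mathcal{L}(E'',F'')$ would not even be well posed.
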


\begin{proposition}
    \label{pro2.5}
    Let $1<p,q<\infty$, then
    \begin{equation*}
        \mathcal{N}(F_{\alpha}^{p},F_{\alpha}^{q})^{\prime}=\mathcal{L}(F_{\alpha}^{q},F_{\alpha}^{p}),\quad \mathcal{K}(F_{\alpha}^{p},F_{\alpha}^{q})^{\prime}=\mathcal{N}(F_{\alpha}^{q},F_{\alpha}^{p})
    \end{equation*}
    where the duality is induced by the trace map:
    \begin{align*}
         \langle S, T\rangle&=\mathrm{Tr}(ST)=\mathrm{Tr}(TS)\\
         \langle A, B\rangle&=\mathrm{Tr}(AB)=\mathrm{Tr}(BA)
    \end{align*}
    where $S\in\mathcal{N}(F_{\alpha}^p,F_{\alpha}^{q}) $, $T\in\mathcal{L}(F_{\alpha}^{q},F_{\alpha}^{p})$, $A\in\mathcal{K}(F_{\alpha}^{p},F_{\alpha}^{q})$, $B\in\mathcal{N}(F_{\alpha}^{q},F_{\alpha}^{p})$.
\end{proposition}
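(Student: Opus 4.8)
The plan is to deduce both identities from Proposition \ref{pro2.4} together with the adjoint isometry of Theorem \ref{th2.3}, using that $F_\alpha^r$ is reflexive with the approximation and Radon--Nikod\'ym properties (Proposition \ref{pro2.2}) and that $(F_\alpha^r)'=F_\alpha^{r'}$ for $1<r<\infty$. The genuinely new content is not the existence of the isometric identifications but the verification that the abstract dual pairings produced by Proposition \ref{pro2.4} coincide with the concrete trace pairings asserted here.

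For the compact case I would first apply Proposition \ref{pro2.4} with $E=F_\alpha^p$ and $F=F_\alpha^q$. Since $E''=F_\alpha^p$ and $F'=F_\alpha^{q'}$ both carry the approximation and Radon--Nikod\'ym properties, the hypotheses hold and we obtain the isometric identity $\mathcal{K}(F_\alpha^p,F_\alpha^q)'=\mathcal{N}((F_\alpha^p)',(F_\alpha^q)')=\mathcal{N}(F_\alpha^{p'},F_\alpha^{q'})$. Next, Theorem \ref{th2.3}, applied with the reflexive space $F_\alpha^p$ in the target slot, shows that $B\mapsto B^{*}$ is an isometric isomorphism of $\mathcal{N}(F_\alpha^q,F_\alpha^p)$ onto $\mathcal{N}(F_\alpha^{p'},F_\alpha^{q'})$. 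Composing these two identifications gives $\mathcal{K}(F_\alpha^p,F_\alpha^q)'=\mathcal{N}(F_\alpha^q,F_\alpha^p)$ as Banach spaces, and it remains to identify the pairing. Writing $T=B^{*}$ for $B\in\mathcal{N}(F_\alpha^q,F_\alpha^p)$ and expanding $T=\sum_n x_n''\otimes y_n'$ with $x_n''\in (F_\alpha^{p'})'=F_\alpha^p$ and $y_n'\in F_\alpha^{q'}$, a direct computation shows that the pairing of Proposition \ref{pro2.4} equals $\operatorname{Tr}(A^{*}T)=\operatorname{Tr}(A^{*}B^{*})=\operatorname{Tr}((BA)^{*})$. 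Invoking $\operatorname{Tr}(C^{*})=\operatorname{Tr}(C)$ for nuclear $C$ on a space with the approximation property, together with the cyclicity $\operatorname{Tr}(BA)=\operatorname{Tr}(AB)$, this reduces to $\operatorname{Tr}(AB)$, which is exactly the asserted pairing.

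For the nuclear case the argument is parallel but starts one dual lower: apply Proposition \ref{pro2.4} with $E=F_\alpha^{p'}$ and $F=F_\alpha^{q'}$, so that $E'=F_\alpha^p$, $F'=F_\alpha^q$ and, by reflexivity, $E''=F_\alpha^{p'}$, $F''=F_\alpha^{q'}$; the hypotheses hold for the same reason as above. This yields the isometric identity $\mathcal{N}(F_\alpha^p,F_\alpha^q)'=\mathcal{L}(F_\alpha^{p'},F_\alpha^{q'})$. Since $F_\alpha^{p'}$ and $F_\alpha^{q'}$ are reflexive, the Banach-space adjoint $\tilde S\mapsto \tilde S^{*}$ is an isometric isomorphism of $\mathcal{L}(F_\alpha^q,F_\alpha^p)$ onto $\mathcal{L}(F_\alpha^{p'},F_\alpha^{q'})$, its inverse being $S\mapsto S^{*}$ under the canonical identifications $X''=X$. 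Substituting $S=\tilde S^{*}$ into the pairing $\langle T,S\rangle_{\mathcal N,\mathcal L}=\sum_n\langle y_n',Sx_n''\rangle$ and using $(\tilde S^{*}x_n'')(y_n')=x_n''(\tilde S y_n')$ turns it into $\sum_n\langle \tilde S y_n',x_n''\rangle=\operatorname{Tr}(\tilde S T)=\operatorname{Tr}(T\tilde S)$, again by cyclicity, which is the trace pairing claimed in the statement.

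I expect the main obstacle to be precisely this pairing bookkeeping: one must keep track of which canonical embeddings and reflexivity identifications are invoked when passing between $\mathcal{N}(F_\alpha^{p'},F_\alpha^{q'})$ and $\mathcal{N}(F_\alpha^q,F_\alpha^p)$, and one must justify the two trace facts — invariance under the Banach-space adjoint, $\operatorname{Tr}(C^{*})=\operatorname{Tr}(C)$, and cyclic invariance $\operatorname{Tr}(AB)=\operatorname{Tr}(BA)$ — each of which rests on the approximation property of the Fock spaces supplied by Proposition \ref{pro2.2}. Once these are in place, the remainder is a formal composition of isometries.
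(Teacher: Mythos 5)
Your proposal is correct and takes essentially the same approach as the paper: compose the isometric identity of Proposition \ref{pro2.4} with adjoint-isometry identifications and reflexivity (Propositions \ref{pro2.2}, Theorem \ref{th2.3}), then verify that the abstract pairing collapses to the trace pairing by expanding a rank-one representation. The only cosmetic difference is the order of composition in the nuclear case --- the paper first applies Theorem \ref{th2.3} to get $\mathcal{N}(F_{\alpha}^{p},F_{\alpha}^{q})^{\prime}\cong\mathcal{N}((F_{\alpha}^{q})^{\prime},(F_{\alpha}^{p})^{\prime})^{\prime}$ and then invokes Proposition \ref{pro2.4}, whereas you apply Proposition \ref{pro2.4} directly with $E=F_{\alpha}^{p^{\prime}}$, $F=F_{\alpha}^{q^{\prime}}$ and transfer by the bounded-operator adjoint isometry --- which changes nothing of substance.
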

\begin{proof}
   We use $E\cong F$ to denote that the two Banach spaces $E$ and $F$ are isometrically isomorphic. Then, we have
   \begin{align*}
       \mathcal{N}(F_{\alpha}^{p},F_{\alpha}^{q})^{\prime}&\cong\mathcal{N}((F_{\alpha}^{q})^\prime,(F_{\alpha}^{p})^\prime)^{\prime}\\
       &\cong\mathcal{L}((F_{\alpha}^{q})^{\prime\prime},(F_{\alpha}^{p})^{\prime\prime})\\
       &\cong\mathcal{L}(F_{\alpha}^{q},F_{\alpha}^{p}).
   \end{align*}
   The first step is obtained from Theorem \ref{th2.3}; the second step from Proposition \ref{pro2.4}; and the last step follows from reflexivity.
   
     Let $S=\sum_{j=1}^{\infty} y_{j} \otimes x_{j} \in \mathcal{N}(F_{\alpha}^{p},F_{\alpha}^{q})$ and $T\in\mathcal{L}(F_{\alpha}^{q},F_{\alpha}^{p}) $; applying Proposition \ref{pro2.4}, we obtain that the duality bracket is given by 
    \begin{align*}
        \langle S, T\rangle&=\langle S^*, \tau T\tau^{-1}\rangle_{\mathcal{N}, \mathcal{L}}\\
        &=\sum_{j=1}^{\infty}\langle y_j,\tau Tx_j\rangle\\
        &=\sum_{j=1}^{\infty}\langle Tx_j,y_j\rangle\\
        &=\mathrm{Tr}(TS)\\
        &=\mathrm{Tr}(ST).
    \end{align*}

    The other proof is similar.
\end{proof}

For convenience, in the following sections, we write $A \lesssim B$ to mean that $A \leq CB$ for some positive constant $C$, whose exact value is insignificant. The relation $A \simeq B$ signifies that $A \lesssim B \lesssim A$.

\section{Nuclear Toeplitz operator}
In this section, given a complex Borel measure $\mu$, we are going to characterize when the Toeplitz operator $T_\mu$ belongs to the nuclear class.

Let $1\leq p\leq \infty$ and $p^\prime$ be the conjugate exponent of $p$.  For $f\in F_\alpha^{p^\prime}$, we regard $f$ as an element of $(F_\alpha^p)^\prime$ under the pairing $\langle\cdot,\cdot\rangle_{F_\alpha^2}$, that is 
\begin{equation*}
    f(g)=\langle g,f\rangle_{F_\alpha^2}
\end{equation*}
for all $g\in F_\alpha^p$. Then we have $\|f\|_{(F_\alpha^p)^\prime}\simeq\|f\|_{\alpha,p^\prime}$.

\begin{definition}
    Let $T: F_\alpha^p\xrightarrow{}F_\alpha^q$ be a linear operator, where $1\leq p,q\leq\infty$, we define the Berezin transform of $T$  as
    \begin{equation*}
        \widetilde{T}(z)=\left\langle T k_{z}, k_{z}\right\rangle _{F_\alpha^2}, \quad z \in \mathbb{C}.
    \end{equation*}
\end{definition}

\begin{theorem}
    \label{th3.2}
    Let $1\leq p<\infty$, $1\leq q,s\leq\infty$. We have:
    \begin{enumerate}
        \item [(1)] If $T\in\mathcal{N}(f_\alpha^\infty,F_\alpha^s)$, then  $\widetilde{T}(z)\in L^r(dA)$ for all $1\leq r\leq \infty$.
        \item[(2)] If $T\in\mathcal{N}(F_\alpha^p,F_\alpha^q)$ and $p\leq q$, then $\widetilde{T}(z)\in L^r(dA)$ for all $(\frac{1}{p^\prime}+\frac{1}{q})^{-1}\leq r\leq\infty$.
        \item[(3)] If $T\in\mathcal{N}(F_\alpha^p,F_\alpha^q)$ and $p>q$, then $\widetilde{T}(z)\in L^r(dA)$ for all $1\leq r\leq\infty$.   
    \end{enumerate}
    Moreover,
    \begin{equation*}
        \|\widetilde{T}\|_{L^r}\lesssim\|T\|_{\mathcal{N}}.
    \end{equation*}
\end{theorem}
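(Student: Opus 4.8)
The plan is to begin from a nuclear representation of $T$ and turn the Berezin transform into an explicit sum. Fix a representation $T=\sum_{j=1}^\infty x_j'\otimes y_j$ with $\sum_j\|x_j'\|\,\|y_j\|<\infty$. In every case the relevant dual is a Fock space: the dual of $f_\alpha^\infty$ is $F_\alpha^1$ (case (1)) and the dual of $F_\alpha^p$ is $F_\alpha^{p'}$ (cases (2),(3)), so under the pairing $\langle\cdot,\cdot\rangle_{F_\alpha^2}$ we may write $x_j'=\langle\cdot,g_j\rangle_{F_\alpha^2}$ with $g_j\in F_\alpha^1$ or $g_j\in F_\alpha^{p'}$, while $y_j\in F_\alpha^s$ or $y_j\in F_\alpha^q$. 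Since $k_z\in f_\alpha^\infty\subset F_\alpha^p$ (indeed $|k_z(w)|e^{-\frac{\alpha}{2}|w|^2}=e^{-\frac{\alpha}{2}|z-w|^2}$) and $\|k_z\|_{\alpha,p}=1$, the series $Tk_z=\sum_j x_j'(k_z)y_j$ converges in the target space, and pairing term by term with the bounded evaluation functional $\langle\cdot,k_z\rangle_{F_\alpha^2}$ together with the identity $\langle f,k_z\rangle_{F_\alpha^2}=f(z)e^{-\frac{\alpha}{2}|z|^2}$ yields the pointwise formula
\[
\widetilde T(z)=\sum_{j=1}^\infty x_j'(k_z)\,\langle y_j,k_z\rangle_{F_\alpha^2}=e^{-\alpha|z|^2}\sum_{j=1}^\infty\overline{g_j(z)}\,y_j(z).
\]
Writing $\widehat f(z):=f(z)e^{-\frac{\alpha}{2}|z|^2}$, this gives $|\widetilde T(z)|\le\sum_j|\widehat{g_j}(z)|\,|\widehat{y_j}(z)|$.

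Next I would bound the $L^r$ norm directly. By Minkowski's inequality in $L^r(dA)$ followed by H\"older's inequality with a pair of exponents $a,b$ satisfying $\tfrac1a+\tfrac1b=\tfrac1r$,
\[
\|\widetilde T\|_{L^r(dA)}\le\sum_{j=1}^\infty\big\||\widehat{g_j}|\,|\widehat{y_j}|\big\|_{L^r(dA)}\le\sum_{j=1}^\infty\|\widehat{g_j}\|_{L^a(dA)}\,\|\widehat{y_j}\|_{L^b(dA)}.
\]
Since $\|\widehat f\|_{L^a(dA)}\simeq\|f\|_{\alpha,a}$ (a constant multiple for finite $a$, and equal to $\|f\|_{\infty,\alpha}$ for $a=\infty$), the decisive step is the continuous nesting of Fock spaces, $\|f\|_{\alpha,a}\lesssim\|f\|_{\alpha,c}$ whenever $c\le a$. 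Choosing $a$ at least the domain-dual exponent and $b$ at least the target exponent, I can replace $\|\widehat{g_j}\|_{L^a}$ and $\|\widehat{y_j}\|_{L^b}$ by $\|g_j\|_{\alpha,p'}$ (resp.\ $\|g_j\|_{\alpha,1}$) and $\|y_j\|_{\alpha,q}$ (resp.\ $\|y_j\|_{\alpha,s}$). Summing, invoking $\|g\|_{(F_\alpha^p)'}\simeq\|g\|_{\alpha,p'}$, and taking the infimum over all representations gives $\|\widetilde T\|_{L^r}\lesssim\|T\|_{\mathcal N}$.

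Finally I would determine, in each case, the exponents $r$ for which this splitting is available. Admissible exponents $a,b$ with $\tfrac1a,\tfrac1b\ge0$, $\tfrac1a\le\tfrac1{p'}$ (resp.\ $\tfrac1a\le1$ in case (1)), $\tfrac1b\le\tfrac1q$ (resp.\ $\tfrac1b\le\tfrac1s$) and $\tfrac1a+\tfrac1b=\tfrac1r$ exist exactly when $\tfrac1r\le\tfrac1{p'}+\tfrac1q$ (resp.\ $\le1+\tfrac1s$), i.e.\ $r\ge(\tfrac1{p'}+\tfrac1q)^{-1}$, as an elementary check of the resulting interval for $\tfrac1a$ shows. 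In case (2), $p<q$ forces $\tfrac1{p'}+\tfrac1q=1+(\tfrac1q-\tfrac1p)<1$, so the threshold exceeds $1$ and one gets precisely the stated range; in case (3), $q\le p$ gives $\tfrac1{p'}+\tfrac1q\ge1$, pushing the threshold to at most $1$ so that all $r\in[1,\infty]$ qualify, and in case (1) the effective exponents $1$ and $s$ give $1+\tfrac1s\ge1$ with the same conclusion. I expect the main technical care to lie in this bookkeeping of conjugate exponents, and especially in the degenerate endpoints ($p=1$ so $p'=\infty$, or $r=\infty$, or $s=\infty$), where only $a=\infty$ or $b=\infty$ is permitted; in each such situation the threshold arithmetic still coincides with the claimed range, so the estimate survives.
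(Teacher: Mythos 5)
Your proof is correct and follows essentially the same route as the paper: expand $\widetilde{T}$ pointwise from a nuclear representation, estimate $\|\widetilde{T}\|_{L^r}$ by H\"older's inequality with exponents $a,b$ satisfying $\tfrac1a+\tfrac1b=\tfrac1r$, use the continuous inclusions $F_\alpha^c\subset F_\alpha^a$ for $c\le a$, and take the infimum over representations. The only difference is one of completeness: you carry out the conjugate-exponent bookkeeping (and the endpoint cases) explicitly for parts (2) and (3), which the paper handles only for part (1) and then dismisses as ``similar.''
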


\begin{proof}

    To prove (1), let $T=\sum_{j=1}^{\infty} f_{j} \otimes g_{j}$, where  $\left(g_{j}\right) \subset F_\alpha^s, \left(f_{j}\right) \subset F_\alpha^1 $ be a nuclear representation. Hence
     \begin{align*}
         \widetilde{T}(z)&=\langle T k_z,k_z\rangle_{F_\alpha^2}\\
        &=\sum_{j=1}^{\infty}\langle k_z,f_j\rangle_{F_\alpha^2}\langle g_j ,k_z\rangle_{F_\alpha^2}\\  
        &=\sum_{j=1}^{\infty}\overline{f_j(z)}g_j(z)e^{-\alpha|z|^2}.
   \end{align*}
  
   Let $1\leq r\leq \infty$, we can find $m\geq1, n\geq s$ such that $\frac{1}{m}+\frac{1}{n}=\frac{1}{r}$. It follows from \cite[Theorem 2.10]{MR2934601}, we have $f_j\in F_\alpha^1\subset F_\alpha^m$, $g_j\in F_\alpha^s\subset F_\alpha^n$, and

   \begin{equation*}
       \|f_j\|_{\alpha,m}\lesssim\|f_j\|_{\alpha,1},\|g_j\|_{\alpha,n}\lesssim\|g_j\|_{\alpha,s}.
   \end{equation*}
  Then by Hölder’s inequality, we have
        \begin{align*}
            \|\widetilde{T}\|_{L^r}&\leq\sum_{j=1}^{\infty}\|{f_j(z)}e^{-\frac{\alpha}{2}|z|^2}\|_{L^m}\|g_j(z)e^{-\frac{\alpha}{2}|z|^2}\| _{L^n}\\
             &\lesssim \sum_{j=1}^{\infty}\|f_j\|_{\alpha,m}\|g_j\| _{\alpha,n}\\
             &\lesssim \sum_{j=1}^{\infty}\|f_j\|_{\alpha,1}\|g_j\| _{\alpha,s}\\
             &\lesssim \sum_{j=1}^{\infty}\|{f_j}\|_{(f_\alpha^\infty)^\prime}\|g_j\| _{\alpha,s}.
    \end{align*}
Now, taking the inﬁmum over all possible representations (\ref{eq5}) yields
    \begin{equation*}
        \|\widetilde{T}\|_{L^r}\lesssim\|T\|_{\mathcal{N}}<\infty.
    \end{equation*}

  The proof of (2) and (3) is similar.      
\end{proof}

\begin{lemma}
    \label{le3.3}
    Let $1\leq p\leq\infty$, $z\rightarrow k_z$ is norm-continuous on $F_\alpha^p$.
\end{lemma}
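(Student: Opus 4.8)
The plan is to work with the weighted kernel $\phi_z(w) := k_z(w)\mathrm{e}^{-\frac{\alpha}{2}|w|^2}$, for which a one-line computation gives the pointwise identity $|\phi_z(w)| = \mathrm{e}^{-\frac{\alpha}{2}|w-z|^2}$. Since $\|k_z - k_{z_0}\|_{\alpha,p}$ equals, up to the normalizing constant $(\tfrac{p\alpha}{2\pi})^{1/p}$, the quantity $\|\phi_z - \phi_{z_0}\|_{L^p(dA)}$ (and equals $\sup_w |\phi_z(w) - \phi_{z_0}(w)|$ when $p=\infty$), it suffices to fix $z_0 \in \mathbb{C}$, restrict to the range $|z - z_0| \le 1$, and show that $\|\phi_z - \phi_{z_0}\|_{L^p(dA)} \to 0$ as $z \to z_0$.

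My main idea is to extract a single pointwise Lipschitz estimate valid for every $1 \le p \le \infty$ at once. Parametrize the segment from $z_0$ to $z$ by $z(s) = z_0 + s(z - z_0)$, $s \in [0,1]$, and write $\phi_z(w) - \phi_{z_0}(w) = \int_0^1 \tfrac{d}{ds}\phi_{z(s)}(w)\,ds$, treating $s$ as a \emph{real} parameter (note that $\phi_z$ is not holomorphic in $z$, since it depends on $\overline{z}$ and $|z|^2$). Differentiating the exponent $\alpha\overline{z(s)}\,w - \tfrac{\alpha}{2}|z(s)|^2 - \tfrac{\alpha}{2}|w|^2$ and using $|\phi_{z(s)}(w)| = \mathrm{e}^{-\frac{\alpha}{2}|w - z(s)|^2}$ yields $\bigl|\tfrac{d}{ds}\phi_{z(s)}(w)\bigr| \le \alpha|z - z_0|\,(|w| + |z(s)|)\,\mathrm{e}^{-\frac{\alpha}{2}|w-z(s)|^2}$. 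As every $z(s)$ stays in the ball of radius $M := |z_0| + 1$, splitting into the regions $|w| \le 2M$ and $|w| > 2M$ bounds the last factor by a fixed function $\Psi(w)$ that is bounded and Gaussian-dominated, hence lies in $L^p(dA)$ for every $p$ with norm depending only on $z_0$ and $\alpha$. Integrating in $s$ gives the pointwise bound $|\phi_z(w) - \phi_{z_0}(w)| \le \alpha|z - z_0|\,\Psi(w)$; taking $L^p$ norms (the supremum when $p=\infty$) produces $\|k_z - k_{z_0}\|_{\alpha,p} \le C_{z_0}\,|z - z_0| \to 0$, which proves continuity at $z_0$, and $z_0$ is arbitrary.

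The step requiring the most care is the case $p = \infty$. There the dominated convergence theorem, which would immediately settle $1 \le p < \infty$ from the pointwise convergence $\phi_z(w) \to \phi_{z_0}(w)$ together with the Gaussian majorant $\mathrm{e}^{-\frac{\alpha}{8}|w - z_0|^2}$ (valid for $|z-z_0|\le 1$), is unavailable, since the $L^\infty$ norm does not interact well with pointwise limits. The segment/derivative estimate is designed precisely to bypass this: it gives control that is \emph{uniform in} $w$, so the single inequality $|\phi_z(w) - \phi_{z_0}(w)| \le \alpha|z-z_0|\,\Psi(w)$ disposes of $p = \infty$ and $1 \le p < \infty$ simultaneously. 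The remaining points are routine Gaussian estimates: computing the exponent derivative and verifying $\Psi \in L^p(dA)$ for all $p$.
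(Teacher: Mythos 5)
Your proof is correct, but it follows a genuinely different route from the paper's. The paper argues qualitatively: for $1\le p<\infty$ it notes that $|(k_z(w)-k_{z_0}(w))e^{-\frac{\alpha}{2}|w|^2}|\le Ce^{-\frac{\alpha}{4}|w|^2}$ whenever $|z-z_0|<1$ and invokes the dominated convergence theorem, and it then settles $p=\infty$ by reducing to the finite-$p$ case through the contractive embedding $\|f\|_{\alpha,\infty}\le\|f\|_{\alpha,p}$ for entire functions. You instead differentiate $s\mapsto\phi_{z(s)}(w)$ along the segment from $z_0$ to $z$ (correctly treating $s$ as a real parameter, since $k_z$ is not holomorphic in $z$), obtaining the uniform pointwise bound $|\phi_z(w)-\phi_{z_0}(w)|\le\alpha|z-z_0|\,\Psi(w)$ with $\Psi\in\bigcap_{1\le p\le\infty}L^p(dA)$; your exponent computation $\bigl|\tfrac{d}{ds}\phi_{z(s)}(w)\bigr|\le\alpha|z-z_0|(|w|+|z(s)|)e^{-\frac{\alpha}{2}|w-z(s)|^2}$ and the region splitting both check out. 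The trade-off: the paper's argument is shorter if one already knows the Fock space embedding (Zhu's pointwise estimate), whereas yours is self-contained, treats all $p$ including $p=\infty$ in one stroke without any auxiliary fact about entire functions, and delivers strictly more than the lemma asks, namely a local Lipschitz estimate $\|k_z-k_{z_0}\|_{\alpha,p}\le C_{z_0}|z-z_0|$ rather than mere continuity. Your remark that dominated convergence alone cannot reach $p=\infty$ is accurate and correctly identifies the one place where the paper needs its extra ingredient.
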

\begin{proof}
    Fix $z_0\in \mathbb{C}$, suppose $|z-z_0|<1$ and $1\leq p<\infty$. Then
    \begin{equation*}
        \|k_z-k_{z_{0}}\|_{\alpha,p}^p=\frac{\alpha p}{2\pi}\int_\mathbb{C}|(k_z(w)-k_{z_0}(w))e^{-\frac{\alpha}{2}|w|^2}|^pdA(w).
    \end{equation*}
    Since $|z-z_0|<1$, there exists a constant $C>0$ such that $|(k_z(w)-k_{z_0}(w))e^{-\frac{\alpha}{2}|w|^2}|\leq Ce^{-\frac{\alpha}{4}|w|^2}$.
    According to the dominated convergence theorem, the conclusion holds for $1\leq p < \infty $. When $p=\infty$, the conclusion follows from the fact $\|f\|_{\alpha,\infty}\leq\|f\|_{\alpha,p}$ for any $1\leq p < \infty $.
\end{proof}

For a fixed positive number $r$, let 
$$ r\mathbb{Z}^{2}=\{nr+imr:n,m\in \mathbb{Z}\}. $$
Then, we define
$$ S_{r}=\{z=x+iy:-r/2\leq x<r/2,-r/2\leq y<r/2\}. $$
Clearly, we have the disjoint decomposition for $\mathbb{C}$:
$$ \mathbb{C}=\bigcup\limits_{w\in r\mathbb{Z}^{2}}(S_{r}+w), $$

Finally, let $\{a_{n}\}$ denote any fixed enumeration of the square lattice $r\mathbb{Z}^{2}$ as a sequence.

\begin{theorem}
    \label{th3.4}
   Let \(\mu\) be a complex Borel measure on \(\mathbb{C}\) with \(|\mu|(\mathbb{C})<\infty\). Then \(T_\mu\) belongs to both \(\mathcal{N}(f_\alpha^\infty, F_\alpha^s)\) and \(\mathcal{N}(F_\alpha^p, F_\alpha^q)\) for \(1\leq p<\infty\) and \(1\leq q,s\leq\infty\). Moreover,
\[
\|T_\mu\|_{\mathcal{N}} \leq \frac{\alpha}{\pi} \, |\mu|(\mathbb{C}).
\]
\end{theorem}
    \begin{proof}
       We first proof $T_\mu\in\mathcal{N}(f_\alpha^\infty, F_\alpha^1)$. Let $z\in \mathbb{C}$, we consider the operator $k_z\otimes k_z\in\mathcal{N}(f_\alpha^\infty,F_\alpha^1) $, i.e.
        \begin{equation*}
            (k_z\otimes k_z)f=\langle f,k_z\rangle _{F_\alpha^2}k_z.
        \end{equation*}
        It is obvious $\|k_z\otimes k_z\|_\mathcal{N}\leq1$ for all $z\in \mathbb{C}$.
For every $r>0$, note that 
$$\sum_{k=1}^\infty |\mu(S_r+a_k)|\leq\sum_{k=1}^\infty|\mu|(S_r+a_k)=|\mu|(\mathbb{C})<\infty,$$ 
thus we can define
    \begin{equation*}
         T_r=\sum_{k=1}^\infty \mu(S_r+a_k)k_{a_k}\otimes k_{a_k} \in \mathcal{N}{(f_\alpha^\infty,F_\alpha^1)}.
    \end{equation*}
    
    Let $r_n=\frac{1}{2^n}$, fix any $\epsilon>0$, since $|\mu|(\mathbb{C})<\infty$,  there exists $R>0$ such that 
    \begin{equation*}
        |\mu|(\mathbb{C}\setminus \overline{ B(0,R)}<\epsilon.
    \end{equation*}
    According to Lemma \ref{le3.3}, there exists a $\delta>0$ such that 
    \begin{equation*}  
          \|k_z-k_w\|_{\alpha,1}<\epsilon
    \end{equation*} 
   whenever $|z-w|<\delta $, and  $z,w\in \overline{B(0,R+1)}$. 
    
    Since$ \{r_n\}$ is a monotonically decreasing sequence tending to zero, there exists an integer $N_0$ such that for all $ N > N_0 $, the set ${\{(S_{r_N}+a_k^{(N)})}\}$ satisfies the following property: if $S_{r_N}+a_k^{(N)} \cap\overline{ B(0, R)} \neq\emptyset$, then $\overline{(S_{r_N}+a_k^{(N)})}$ is contained in $\overline{B(0, R+1)}$. 
    
    On the one hand, there exists an integer $N_1$ such that $r_N < \sqrt{2} \delta$ for all $N > N_1$. Fix an $N > \max(N_0, N_1)$. For any $N' > N$, recall that in a Banach space, any absolutely summable sequence is unconditionally summable \cite[Proposition 1.1]{MR1342297}. We have,
   \begin{align*}
       &T_{r_N}-T_{r_{N^\prime}}=\sum_{k=1}^\infty \mu(S_{r_N}+a_k^{(N)})k_{a_k^{(N)}}\otimes k_{a_k^{(N)}}-\sum_{j=1}^\infty \mu(S_{r_{N^{\prime}}}+a_j^{(N^\prime)})k_{a_j^{(N^\prime)}}\otimes k_{a_j^{(N^\prime)}}\\
       &=\sum_{k=1}^\infty\sum_{j=1}^\infty \mu[(S_{r_N}+a_k^{(N)})\cap (S_{r_{N^{\prime}}}+a_j^{(N^\prime)})](k_{a_k^{(N)}}\otimes k_{a_k^{(N)}}-k_{a_j^{(N^\prime)}}\otimes k_{a_j^{(N^\prime)}})\\
       &=I_1+I_2
   \end{align*}
   where
   \begin{align*}
      & I_1=\sum_{k=1}^\infty\sum_{j=1}^\infty \mu[(S_{r_N}+a_k^{(N)})\cap (S_{r_{N^{\prime}}}+a_j^{(N^\prime)})](k_{a_k^{(N)}}-k_{a_j^{(N^\prime)}})\otimes k_{a_j^{(N^\prime)}}\\
     & I_2=\sum_{k=1}^\infty\sum_{j=1}^\infty \mu[(S_{r_N}+a_k^{(N)})\cap (S_{r_{N^{\prime}}}+a_j^{(N^\prime)})]k_{a_k^{(N)}}\otimes(k_{a_k^{(N)}}-k_{a_j^{(N^\prime)}}).\\
   \end{align*}
   
   Let 
   $$S_1=\{k\in \mathbb{N},(S_{r_N}+a_k^{(N)}) \cap \overline{B(0,R)}\neq\emptyset\},$$
   $$S_2=\{k\in \mathbb{N},(S_{r_N}+a_k^{(N)}) \cap \overline{B(0,R)}=\emptyset\}.$$
   It is easy to see that if $(S_{r_N}+a_k^{(N)})\cap (S_{r_{N^{\prime}}}+a_j^{(N^\prime)})\neq\emptyset,$ then we have $a_k^{(N)},$ $a_j^{(N^\prime)}\in \overline{(S_{r_N}+a_k^{(N)})}$, and $ |a_k^{(N)}-a_j^{(N^\prime)}|<\delta$. Thus
   \begin{align*}
     & \| \sum_{k\in S_1}\sum_{j=1}^\infty \mu[(S_{r_N}+a_k^{(N)})\cap (S_{r_{N^{\prime}}}+a_j^{(N^\prime)})]k_{a_k^{(N)}}\otimes(k_{a_k^{(N)}}-k_{a_j^{(N^\prime)}})\|_\mathcal{N}\\
     &\leq \sum_{k\in S_1}\sum_{j=1}^\infty |\mu|[(S_{r_N}+a_k^{(N)})\cap (S_{r_{N^{\prime}}}+a_j^{(N^\prime)})]\|(k_{a_k^{(N)}}-k_{a_j^{(N^\prime)}})\|_{\alpha,1}\| k_{a_j^{(N^\prime)}}\|_{(f_\alpha^\infty)^\prime}\\
     &\leq |\mu|(\mathbb{C})\epsilon.
   \end{align*}
   On the other hand,
   \begin{align*}
        & \| \sum_{k\in S_2}\sum_{j=1}^\infty \mu[(S_{r_N}+a_k^{(N)})\cap (S_{r_{N^{\prime}}}+a_j^{(N^\prime)})]k_{a_k^{(N)}}\otimes(k_{a_k^{(N)}}-k_{a_j^{(N^\prime)}})\|_\mathcal{N}\\
        &\leq2|\mu|(\mathbb{C}\setminus \overline{ B(0,R)})\\
        &\leq2\epsilon.
   \end{align*}
   Therefore, we have $\|I_2\|_{\mathcal{N}}\leq(2+|\mu|{(\mathbb{C}}))\epsilon$. Similarly, we have
   $\|I_1\|_{\mathcal{N}}\leq C\epsilon$.
   
   Then for any $N^\prime>N$, we have 
   \begin{equation*}
       \|T_{r_N}-T_{r_N^\prime}\|_\mathcal{N}\leq\|I_1\|_{\mathcal{N}}+\|I_2\|_{\mathcal{N}}\leq C\epsilon.
   \end{equation*}
   
    Thus, $T_{r_n}$ is a Cauchy sequence in $\mathcal{N}({f_\alpha^\infty,F_\alpha^1})$. Then there is a $T\in\mathcal{N}(f_\alpha^\infty, F_\alpha^1)$,  such that $T_{r_n}\rightarrow T$ with $\|\cdot\|_\mathcal{N}$. Therefore, $T_{r_n}\rightarrow T$ with $\|\cdot\|_{op}$ (operator norm), since $\|A\|_{op}\leq\|A\|_\mathcal{N}$ for all $A\in\mathcal{N}( f_\alpha^\infty,F_\alpha^1 )$. Then it implies  that $\widetilde{T_{r_n}}\rightarrow{\widetilde{T}}$ pointwise.

   Recall that $\widetilde{\mu}(z)=\frac{\alpha}{\pi}\int_\mathbb{C}e^{-\alpha|z-w|^2} d\mu(w)$, we have
    \begin{align*}
        & | \widetilde{T_r}(z)-\frac{\pi}{\alpha}\widetilde{\mu}(z)|=|\sum_{k=1}^\infty \mu(S_r+a_k)e^{-\alpha|z-a_k|^2}-\int_\mathbb{C}e^{-\alpha|z-w|^2} d\mu(w)|\\
     &=|\sum_{k=1}^\infty\int_{S_r+a_k}e^{-\alpha|z-w|^2}-e^{-\alpha|z-a_k|^2}d\mu(w)|\\
     &\leq\sum_{k=1}^\infty\int_{S_r+a_k}|e^{-\alpha|z-w|^2}-e^{-\alpha|z-a_k|^2}|d|\mu|(w).
    \end{align*}

   Since $e^{-\alpha|z|^2}$ is uniformly continuous on $\mathbb{C}$, for any $\epsilon>0$, there exists a $r_0>0$, if $r<r_0$, we have 
   \begin{equation*}
       |e^{-\alpha|z-w|^2}-e^{-\alpha|z-a_k|^2}|<\frac{\epsilon}{|\mu|(\mathbb{C})}
   \end{equation*}
    for every $w\in S_r+a_k$. Hence, if $r<r_0$, we have
   \begin{align*}
       | \widetilde{T_r}(z)-\frac{\pi}{\alpha}\widetilde{\mu}(z)|<\epsilon.
   \end{align*}

    This implies  $\lim\limits_{r\rightarrow0}\widetilde{T_r}(z)=\frac{\pi}{\alpha}\widetilde{\mu}(z)$, hence $\widetilde{T}(z)=\frac{\pi}{\alpha}\widetilde{\mu}(z)$.

    Set $F(z,w)=T_\mu K_z(w)$ and $G(z,w)=TK_z(w)$. These functions are analytic with respect to $w$, and are anti-holomorphic with respect to $z$. Furthermore, we have
$$
F(z,z)=\frac{\alpha}{\pi}G(z,z)=e^{\alpha|z|^2}\widetilde{\mu}(z).
$$
A standard polarization theorem in several complex variables (see \cite{MR1846625}) yields $F(z,w)=\frac{\alpha}{\pi}G(z,w)$ for all $z,w\in\mathbb{C}$. Then we have $T_\mu K_z=\frac{\alpha}{\pi}TK_z$ for all $z\in\mathbb{C}$. By the density of the finite linear span of kernel functions in $f_\alpha^{\infty}$, we have $T_\mu=\frac{\alpha}{\pi}T\in\mathcal{N}(f_\alpha^{\infty},F_\alpha^{1})$.

    The above proof shows that $\frac{\alpha}{\pi}T_{r_n}\stackrel{\|\cdot\|_{\mathcal{N}}}{\longrightarrow}T_\mu$. Then we have
    \begin{equation*}
        \|T_\mu\|_{\mathcal{N}}=\frac{\alpha}{\pi}\lim\limits_{n\rightarrow\infty}\|T_{r_n}\|_{\mathcal{N}}\leq\frac{\alpha}{\pi} |\mu|(\mathbb{C}).
    \end{equation*}

       Recall that the normalized reproducing kernel \(k_z\) is a unit vector in \(F_\alpha^p\) for all \(1\leq p\leq\infty\), hence the operator \(T_r\) defined above belongs to both \(\mathcal{N}(f_\alpha^\infty, F_\alpha^s)\) and \(\mathcal{N}(F_\alpha^p, F_\alpha^q)\) for the indices under consideration. Repeating this argument completes the proof for all cases.
    \end{proof}

From the proof of Theorem \ref{th3.4}, let
\[
S = \{T_\mu : |\mu|(\mathbb{C}) < \infty\},
\]
viewed as a subset of \(\mathcal{N}(F_{\alpha}^p, F_{\alpha}^q)\). Then we have the inclusion
\[
S \subset \overline{\operatorname{span}}^{\|\cdot\|_{\mathcal{N}}} \{k_z \otimes k_z : z \in \mathbb{C}\}.
\]

  In 2020, Fulsche proved that $T_{f}\in\mathcal{N}(F_{\alpha}^{p})$ if $f\in L^{1}(\mathbb{C})$ (see \cite[lemma 2.11]{MR4107813}, thus the above Theorem can be viewed as a generalization of this result. Moreover, if $\mu$ is a positive measure, Theorem \ref{th1.1} shows that the condition $\mu(\mathbb{C})<\infty$ is also sufficient for $T_{\mu}\in\mathcal{N}(F_{\alpha}^{p},F_{\alpha}^{q})$ when $q\leq p$. Next, we give the proof of Theorem \ref{th1.1}:
  
    \begin{proof}[Proof of Theorem \ref{th1.1}]
       A direct calculation shows that $\widetilde{T_\mu}(z)=\widetilde{\mu}(z)$. Since $\mu\geq0$, we can use Fubini’s Theorem to obtain
       \begin{align*}
           \|\widetilde{\mu}\|_{L^1}&=\frac{\alpha}{\pi}\int_{\mathbb{C}}\int_{\mathbb{C}}e^{-\alpha|z-w|^2}d\mu(w)dA(z)\\
           &=\frac{\alpha}{\pi}\int_{\mathbb{C}}\int_{\mathbb{C}}e^{-\alpha|z-w|^2}dA(z)d\mu(w)\\
           &=\mu(\mathbb{C}).
       \end{align*}
       $ (1) \Rightarrow (3) $:  If $1\leq q\leq p<\infty$,  by Theorem \ref{th3.2} (3), we have $\widetilde{T_\mu}(z)=\widetilde{\mu}(z)\in L^1(\mathbb{C},dA)$, which implies $\mu(\mathbb{C})<\infty$. $(3) \Rightarrow (1)$ follows from Theorem \ref{th3.4}. The proof of $(2)\Leftrightarrow(3)$ is similar.

       Estimate (\ref{eq1}) follows from the equation $\|\widetilde{\mu}\|_{L^1}=\mu(\mathbb{C})$, Theorem \ref{th3.2} and Theorem \ref{th3.4}.
    \end{proof}

\begin{remark}
    Suppose $1\leq p<q,$ and $\mu\geq0$. If $\mu(\mathbb{C})<\infty$, by Theorem \ref{th3.4}, we have $T_\mu\in\mathcal{N}(F_\alpha^p,F_\alpha^q)$. However, we cannot use Theorem \ref{th3.2} to prove the equivalence between $T_\mu\in\mathcal{N}(F_\alpha^p,F_\alpha^q)$ and $\mu(\mathbb{C})<\infty$,  since in this case $(\frac{1}{p^\prime}+\frac{1}{q})^{-1}>1$. In fact, we will show that there exists $T\in\mathcal{N}(F_\alpha^p,F_\alpha^q)$ such that $\widetilde{T}(z)\notin L^1$.
\end{remark}

\begin{theorem}
    \label{th3.6}
    Let $1\leq p<\infty$, and let $g$ be an entire function. Then 
    $$\int_{\mathbb{C}}|f(z)\overline{g(z)}|e^{-\alpha|z|^2}dA(z)<\infty$$
     for all $f\in F_\alpha^p$ if and only if $g\in F_\alpha^{p^\prime}$.
\end{theorem}
\begin{proof}
    $\Rightarrow:$ For any $n\in \mathbb{N}^{+}$, define the truncated function
    \begin{equation*}
        g_n(z)=\chi_{\{|g(z)|\leq n\}}\cdot g(z),
    \end{equation*}
    so that
    \begin{equation*}
        \lim_{n\rightarrow\infty}g_n(z)=g(z)
    \end{equation*}
    for all $z\in\mathbb{C}$. For any measurable function $h$, define $\varphi_{h}: F_\alpha^p\rightarrow\mathbb{C}$ by
    \begin{equation*}
        \varphi_{h}(f)=\int_\mathbb{C}f(z)\overline{h(z)}e^{-\alpha|z|^2}dA(z),\quad  f\in F_\alpha^p.
    \end{equation*}
Then we have $\varphi_{g_n}\in(F_\alpha^p)^\prime$.  Fix $f\in F_\alpha^p$, since $|g_n(z)|\leq|g(z)|$, we can use the Dominated Convergence Theorem to obtain
\begin{equation*}
    \lim_{n\rightarrow\infty}\varphi_{g_n}(f)=\varphi_g(f).
\end{equation*}
Hence $\sup_n|\varphi_{g_n}(f)|<\infty$. It follows from the Principle of Uniform Boundedness that we have
\begin{equation*}
\sup_n\|\varphi_{g_n}\|=C<\infty.
\end{equation*}
Then 
\begin{equation*}
    |\varphi_g(f)|=\lim\limits_{n\rightarrow\infty}|\varphi_{g_n}(f)|\leq C\|f\|
\end{equation*}
for all $f\in F_\alpha^p$. It implies that $\varphi_g\in (F_\alpha^p)^\prime$. Since $(F_\alpha^p)^\prime$ can be identified with $F_\alpha^{p^\prime}$ under the integral pairing $\langle\cdot,\cdot\rangle_{F_\alpha^2}$, we can find $g^\prime\in F_\alpha^{p^\prime}$ such that
\begin{equation*}
    \varphi_{g-g^\prime}=0.
\end{equation*}
Suppose $(g-g^\prime)(z)=\sum\limits_{k=0}^\infty a_kz^k$. For any $n\in\mathbb{N}$ , we have 
\begin{equation*}
    \int_{\mathbb{C}}|z^n||\overline{(g-g^\prime)(z)}|e^{-\alpha|z|^2}dA(z)<\infty.
\end{equation*}
Then we can use the Dominated Convergence Theorem to obtain
\begin{align*}
    \varphi_{g-g^\prime}(z^n)&= \int_{\mathbb{C}}z^n\overline{(g-g^\prime)(z)}e^{-\alpha|z|^2}dA(z)\\
    &=lim_{R\rightarrow{\infty}}\int_{|z|<R}z^n\overline{(g-g^\prime)(z)}e^{-\alpha|z|^2}dA(z)\\
          &=lim_{R\rightarrow{\infty}} \overline{a_n}\int_{|z|<R}|z|^{2n}e^{-\alpha|z|^2}dA(z).\\
          &=\overline{a_n}\int_\mathbb{C}|z|^{2n}e^{-\alpha|z|^2}dA(z)\\
          &=0.
\end{align*}
    It implies that $a_n=0$ for all $n\in\mathbb{N}$. Then we have $g=g^\prime\in F_\alpha^{p^\prime}$.

    The other direction follows from Hölder’s inequality.
   \end{proof}
\begin{example}
    It is well-known that for $1\leq p<q\leq\infty$, we have the (proper) inclusion $F_\alpha^p\subset F_\alpha^q$, and the inclusion is proper. Theorem \ref{th3.6} tells us: We can find $f\in F_\alpha^{p^\prime}$ and $g\in F_\alpha^q$ such that
    \begin{equation*}
        \int_{\mathbb{C}}|f(z)\overline{g(z)}|e^{-\alpha|z|^2}dA(z)=\infty.
    \end{equation*}
In other words, if we define $T=f\otimes g\in \mathcal{N}(F_\alpha^p,F_\alpha^q)$, then we have $\|\widetilde{T}\|_{L^1}=\infty.$  Subsequently, we give an explicit construction of $f$ and $g$.

    If $1 < p < q < \infty$, we have $1 < q' < p' < \infty$. Let $a = \frac{1}{q} - \frac{1}{p} = \frac{1}{p'} - \frac{1}{q'}$ and define $\epsilon_{n} = n^{\frac{a}{2}}$. A sequence of indices $n_{1} < n_{2} < \cdots$ can be chosen so that $3^{-k} \leq \epsilon_{n_{k}} \leq 2^{-k}$ for all $k \in \mathbb{N}$. Fix a constant $b$ with $\sqrt{3} < b < 2$. We consider the function defined by $f(z) = \sum a_{n}z^{n}$, where its coefficients are given by
   \begin{equation*}
a_n = \begin{cases} 
b^k\sqrt{\frac{\alpha^{n_{k}}}{(n_k)!}} \, n_k^{\frac{1}{4} - \frac{1}{2p^\prime}} \, \epsilon_{n_k}, & \text{if } n = n_k \text{ for some } k, \\ 
0, & \text{otherwise}.
\end{cases}
\end{equation*}

The membership of $f$ in $F_{\alpha}^{p'}$ is verified by computing two sums. For $1 < p' \leq 2$,
\begin{equation*}
\sum_{n=0}^{\infty} |a_{n}|^{p'} \left( \frac{n!}{\alpha^{n}} \right)^{\frac{p'}{2}} n^{-\frac{p'}{4} + \frac{1}{2}} = \sum_{k=1}^{\infty} b^{p'k} \epsilon_{n_{k}}^{p'} \leq \sum_{k=1}^{\infty} \left( \frac{b^{p'}}{2^{p'}} \right)^{k} < \infty.
\end{equation*}
If $2 \leqslant p' < \infty$, then
\begin{equation*}
\sum_{n=1}^{\infty} |a_{n}|^{p} \left( \frac{n!}{\alpha^{n}} \right)^{\frac{p}{2}} n^{\frac{p}{4} - \frac{1}{2}} = \sum_{k=1}^{\infty} b^{pk} n_{k}^{\frac{p}{4} - \frac{1}{2} + \frac{p}{4} - \frac{p}{2p'}} \epsilon_{n_{k}}^{p} \leq \sum_{k=1}^{\infty} \left( \frac{b^{p}}{2^{p}} \right)^{k} < \infty.
\end{equation*}
The convergence of these series, together with Theorems 1 and 4 in \cite{MR2215157}, implies that $ f \in F_{\alpha}^{p'} $. Since $a_{n}\neq o\left(\sqrt{\frac{\alpha^{n}}{n!}}n^{\frac{1}{4}-\frac{1}{2q^{\prime}}}\right)\quad(n\rightarrow\infty), $
it follows from Corollary 5 of \cite{MR2215157} that $ f\in F_{\alpha}^{p^{\prime}}\setminus F_{\alpha}^{q^{\prime}} $.

A function $g$ is constructed analogously. Let
\begin{equation*}
b_{m}=\begin{cases}
b^{k}\sqrt{\dfrac{\alpha^{n_{k}}}{(n_{k})!}}\,n_{k}^{\frac{1}{4}-\frac{1}{2q}}\,\epsilon_{n_{k}},&\text{if }m=n_{k},\\
0,&\text{otherwise,}
\end{cases}
\end{equation*}
and define $ g(z)=\sum b_{m}z^{m}. $ It then follows that $ g\in F_{\alpha}^{q}\setminus F_{\alpha}^{p}. $

If $ \int_{\mathbb{C}}|f(z)g(z)|e^{-\alpha|z|^{2}}\mathrm{d}A(z)<\infty, $ then by the Dominated Convergence Theorem, we obtain
\begin{equation*}
\begin{aligned}
\int_{\mathbb{C}}f(z)\overline{g(z)}e^{-\alpha|z|^{2}}\mathrm{d}A(z)&=\lim_{R\rightarrow\infty}\int_{|z|<R}f(z)\overline{g(z)}e^{-\alpha|z|^{2}}\mathrm{d}A(z)\\
&=\lim_{R\rightarrow\infty}\sum_{n=0}^{\infty}\sum_{m=0}^{\infty}a_{n}\overline{b_{m}}\int_{|z|<R}z^{n}\overline{z}^{m}e^{-\alpha|z|^{2}}\mathrm{d}A(z)\\
&=\lim_{R\rightarrow\infty}\sum_{n=0}^{\infty}a_{n}\overline{b_{n}}\int_{|z|<R}|z|^{2n}e^{-\alpha|z|^{2}}\mathrm{d}A(z).
\end{aligned}
\end{equation*}
From the construction above, we have $ a_{n}\overline{b_{n}}\geq 0 $ and $ \int_{|z|<R}|z|^{2n}e^{-\alpha|z|^{2}}\mathrm{d}A(z) $ is monotonically increasing in $R$. The Monotone Convergence Theorem yields:
    \begin{align*}
\int_{\mathbb{C}}f(z)\overline{g(z)}e^{\alpha|z|^2}dA(z)&=lim_{R\rightarrow{\infty}}\sum_{n=0}^\infty a_n\overline{b_n}\int_{|z|<R}|z|^{2n}e^{-\alpha|z|^2}dA(z)\\
        &=\sum_{n=0}^\infty a_n\overline{b_n}\int_{\mathbb{C}}|z|^{2n}e^{-\alpha|z|^2}dA(z)\\
        &=\frac{\pi}{\alpha}\sum_{k=1}^\infty\gamma^{2k}\delta_{n_k}\\
        &\geq\frac{\pi}{\alpha}\sum_{k=1}^\infty(\frac{\gamma^2}{3})^k\\
        &=\infty.
    \end{align*}
But $\int_{\mathbb{C}}f(z)\overline{g(z)}\mathrm{e}^{-\alpha|z|^2}dA(z)\leq\int_{\mathbb{C}}|f(z){g(z)}|\mathrm{e}^{-\alpha|z|^2}dA(z)$. We obtain a contradiction.
\end{example}

Finally, we conclude this section with an open problem.
\begin{question}
For $1\leq p<q\leq\infty$ and $\mu\geq0$, what is the necessary and sufficient condition for $T_\mu\in\mathcal{N}(F_\alpha^p,F_\alpha^q)$?
\end{question}

\section{Trace formula and density theorem }
In this section, for an operator $T\in\mathcal{N}(F_\alpha^p)$, $ (1<p<\infty)$, we will provide some formulas for calculating its trace. Finally, we will establish a density theorem.

\begin{lemma}
\label{le4.1}
    Let $1<p<\infty$ and $T\in \mathcal{N}(F^p_\alpha)$, then we have:
    \begin{equation*}
        \mathrm{Tr}(T)=\frac{\alpha}{\pi}\int_\mathbb{C}\widetilde{T}(z)dA(z).
        \end{equation*}
        \end{lemma}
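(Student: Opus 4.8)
The plan is to evaluate both sides of the identity on a single nuclear representation of $T$ and verify that they agree term by term. Since $1<p<\infty$, Proposition \ref{pro2.2} guarantees that $F_\alpha^p$ has the approximation property, so $Tr(T)$ is well defined and independent of the chosen representation. Fix a nuclear representation $T=\sum_{j=1}^\infty x_j'\otimes y_j$ with $(x_j')\subset (F_\alpha^p)'$, $(y_j)\subset F_\alpha^p$, and $\sum_{j}\|x_j'\|\,\|y_j\|<\infty$. Using the identification of $(F_\alpha^p)'$ with $F_\alpha^{p'}$ through the pairing $\langle\cdot,\cdot\rangle_{F_\alpha^2}$, I would write $x_j'(g)=\langle g,f_j\rangle_{F_\alpha^2}$ for a suitable $f_j\in F_\alpha^{p'}$ with $\|f_j\|_{\alpha,p'}\simeq\|x_j'\|$. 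With this notation the trace is
\begin{equation*}
Tr(T)=\sum_{j=1}^\infty x_j'(y_j)=\sum_{j=1}^\infty\langle y_j,f_j\rangle_{F_\alpha^2}.
\end{equation*}

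Next I would compute the Berezin transform pointwise. Applying $T$ to $k_z$ and invoking the reproducing identities $\langle k_z,f_j\rangle_{F_\alpha^2}=\overline{f_j(z)}\,e^{-\frac{\alpha}{2}|z|^2}$ and $\langle y_j,k_z\rangle_{F_\alpha^2}=y_j(z)\,e^{-\frac{\alpha}{2}|z|^2}$, exactly as in the proof of Theorem \ref{th3.2}, one obtains
\begin{equation*}
\widetilde{T}(z)=\langle Tk_z,k_z\rangle_{F_\alpha^2}=\sum_{j=1}^\infty\overline{f_j(z)}\,y_j(z)\,e^{-\alpha|z|^2}.
\end{equation*}
Integrating against $\tfrac{\alpha}{\pi}\,dA$ and interchanging the sum with the integral would then yield
\begin{equation*}
\frac{\alpha}{\pi}\int_{\mathbb{C}}\widetilde{T}(z)\,dA(z)=\sum_{j=1}^\infty\frac{\alpha}{\pi}\int_{\mathbb{C}}\overline{f_j(z)}\,y_j(z)\,e^{-\alpha|z|^2}\,dA(z)=\sum_{j=1}^\infty\langle y_j,f_j\rangle_{F_\alpha^2},
\end{equation*}
which is precisely $Tr(T)$.

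The only point requiring care, and the main obstacle, is justifying the interchange of summation and integration; everything else is a direct calculation. For this I would establish absolute convergence of the double integral: by H\"older's inequality with exponents $p$ and $p'$,
\begin{equation*}
\int_{\mathbb{C}}|f_j(z)|\,|y_j(z)|\,e^{-\alpha|z|^2}\,dA(z)\lesssim\|f_j\|_{\alpha,p'}\,\|y_j\|_{\alpha,p},
\end{equation*}
and since $\|f_j\|_{\alpha,p'}\simeq\|x_j'\|$, summation in $j$ gives $\sum_{j}\|f_j\|_{\alpha,p'}\|y_j\|_{\alpha,p}\lesssim\sum_{j}\|x_j'\|\,\|y_j\|<\infty$. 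Fubini's theorem (in its form for series, or equivalently dominated convergence applied to the partial sums) then legitimizes the interchange and completes the proof. I would also note that $\widetilde{T}$ depends only on $T$, so the formula is automatically independent of the representation, in harmony with the well-definedness of the trace guaranteed by the approximation property.
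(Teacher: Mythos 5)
Your proposal is correct and follows essentially the same route as the paper: fix a nuclear representation, identify the functionals with elements of $F_\alpha^{p'}$ via the pairing $\langle\cdot,\cdot\rangle_{F_\alpha^2}$, expand $\widetilde{T}(z)=\sum_j\overline{f_j(z)}\,y_j(z)\,e^{-\alpha|z|^2}$, and integrate term by term to recover $\sum_j\langle y_j,f_j\rangle_{F_\alpha^2}=Tr(T)$. Your H\"older-based justification of the sum--integral interchange is a welcome addition, since the paper performs this interchange without comment.
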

        \begin{proof}
            Let $T=\sum_{j=1}^{\infty} f_{j} \otimes g_{j}$ be a nuclear representation of $T$, where  $\left(g_{j}\right) \subset F_\alpha^p,\left(f_{j}\right) \subset F_\alpha^{p^\prime }$. Then we have $\mathrm{Tr}(T)=\sum_{j=1}^{\infty}\langle g_j,f_j\rangle=\sum_{j=1}^{\infty}\langle g_j,f_j\rangle_{F_\alpha^2}$. And the trace is independent of the choice of representation. Then we have:
            \begin{align*}
                \frac{\alpha}{\pi}\int_\mathbb{C}\widetilde{T}(z)dA(z)&=\frac{\alpha}{\pi}\int_\mathbb{C}\langle Tk_z,k_z\rangle_{F_\alpha^2}dA(z)\\
                &=\frac{\alpha}{\pi}\int_\mathbb{C}\sum_{j=1}^{\infty}\langle K_z,f_j\rangle_{F_\alpha^2}\langle g_j,K_z\rangle_{F_\alpha^2}e^{-\alpha|z|^2}dA(z)\\
                &=\sum_{j=1}^{\infty}\int_\mathbb{C}\overline{f_j(z)}g_j(z)d\lambda_\alpha(z)\\
                &=\mathrm{Tr}(T).
            \end{align*}.       
        \end{proof}

Notice that the case  $p=q=2$ of the following theorem appears in \cite[Theorem 6]{MR1277446}, but we adopt a different approach here.

     \begin{theorem}
    \label{th4.2}
    Suppose $\varphi$ is Lebesgue measurable on $\mathbb{C}$ and $S\in\mathcal{L}(F_\alpha^q,F_\alpha^p)$, where $1<p,q<\infty$. If
    \begin{enumerate}
        \item [(1)] $\varphi K_z \in L_\alpha^{p^{\prime}}$ for every $z\in \mathbb{C}$,
        \item[(2)] $T_\varphi\in\mathcal{L}(F_\alpha^p,F_\alpha^q)$,
        \item[(3)]  $T_\varphi S \in \mathcal{N}(F_\alpha^q)$,
        \item[(4)] $\int_\mathbb{C}\int_\mathbb{C}|\varphi(w)||K(z,w)||SK_z(w)|d\lambda_\alpha(z)d\lambda_\alpha(w)<\infty$.
    \end{enumerate}
  Then we have
    \begin{equation*}
        \mathrm{Tr}(T_\varphi S)=\frac{\alpha}{\pi}\int_\mathbb{C}\varphi(z)\widetilde{S}(z)dA(z).
    \end{equation*}
\end{theorem}
\begin{proof}
   Let $f\in F_\alpha^p $, we can find a sequence $f_n\in K$ such that $f_n$ converges to $f$ with $\|\|_{\alpha,p}$. The ser $K$ is defined as (\ref{eq3}). By assumption (2), we have $T_\varphi f_n \stackrel{\|\cdot\|_{\alpha,q}}{\longrightarrow}T_\varphi f$, hence $T_\varphi f_n(z)\rightarrow T_\varphi f(z)$ for every $z\in \mathbb{C}$.

Since $f_n\in K$, we have
\begin{equation*}
    T_\varphi f_n(z)=\int_\mathbb{C}f_n(w)\varphi(w)K_\alpha(z,w)d\lambda_\alpha(w)
\end{equation*}
   for all $n\in \mathbb{N}$ and $z\in\mathbb{C}$.
   Then fix $z\in\mathbb{C}$, by assumption (1), we have
   \begin{align*}
       &|\int_\mathbb{C}(f_n(w)-f(w))\varphi(w)K(z,w)d\lambda_\alpha(w)|
       \\
       &\leq \frac{\alpha}{\pi}\int_\mathbb{C}|(f_n(w)-f(w))e^{-\frac{\alpha}{2}|z|^2}||\varphi(w)K(z,w)e^{-\frac{\alpha}{2}|z|^2}|dA(w)\\
       &\leq C\|f_n-f\|_{\alpha,p}\|\varphi K_z\|_{\alpha,p^\prime}\rightarrow0(n\rightarrow\infty).
   \end{align*}
   
   Hence
    \begin{equation*}
        T_\varphi f(z)=\int_\mathbb{C}f(w)\varphi(w)K(z,w)d\lambda_\alpha(w)
    \end{equation*}
    for every $f\in F_\alpha^p$.
    Then we can use Fubini's Theorem to obtain 
    \begin{equation}\label{eq6}
        \begin{aligned}
        \mathrm{Tr}(T_\varphi S)&=\frac{\alpha}{\pi}\int_\mathbb{C}\widetilde{T_\varphi S}(z)dA(z)\\
        &=\int_\mathbb{C}\langle T_\varphi SK_z,K_z\rangle_{F_\alpha^2}d\lambda_\alpha(z)\\
        &=\int_\mathbb{C}T_\varphi SK_z(z) d\lambda_{\alpha}(z)\\
        &=\int_\mathbb{C}d\lambda_{\alpha}(z)\int_\mathbb{C}\varphi(u)K_u(z)SK_z(u)d\lambda_\alpha(u)\\
        &=\int_\mathbb{C}\varphi(u)d\lambda_\alpha(u)\int_\mathbb{C}K_u(z)SK_z(u)d\lambda_{\alpha}(z).
    \end{aligned}
    \end{equation}
      
      Let $S^\prime\in\mathcal{L}(F_\alpha^{p^\prime},F_\alpha^{q^\prime})$ be the adjoint operator of $S$ under the pairing $\langle\cdot,\cdot\rangle_{F_\alpha^2}$. Then we have
      \begin{equation*}
          \langle Sf,g\rangle_{F_\alpha^2}=\langle f,S^\prime g\rangle_{F_\alpha^2}
      \end{equation*}
      for all $f\in F_\alpha^q, g\in F_\alpha^{p^\prime}$ . Hence:
        \begin{align*}
            \int_\mathbb{C}K_u(z)SK_z(u)d\lambda_{\alpha}(z)&=\int_\mathbb{C}K_u(z)\langle SK_z,K_u\rangle_{F_\alpha^2}d\lambda_\alpha(z)\\
            &=\int_\mathbb{C}K_u(z)\langle K_z,S^\prime K_u\rangle_{F_\alpha^2}d\lambda_\alpha(z)\\
            &=\overline{\int_\mathbb{C}K_z(u)S^\prime K_u(z)d\lambda_{\alpha}(z)}\\
            &=\overline{S^\prime K_u(u)}\\
            &=e^{\alpha|u|^2}\widetilde{S}(u).
        \end{align*}
        It follows from (\ref{eq6}), we have:
        \begin{equation*}
             \mathrm{Tr}_{F_\alpha^q}(T_\varphi S)=\frac{\alpha}{\pi}\int_\mathbb{C}\varphi(u)\widetilde{S}(u)dA(u).
        \end{equation*}
        
\end{proof}
We can now prove

\begin{lemma}
    \label{le4.3}
    Let $1<p, q<\infty$, and let $\varphi$ be a bounded, compactly supported function on $\mathbb{C} $. Then $T_\varphi\in\mathcal{N}(F_\alpha^p,F_\alpha^q)$. Furthermore, for any $S\in\mathcal{L}(F_\alpha^q,F_\alpha^p)$, we have:
    \begin{equation}\label{eq7}
        \mathrm{Tr}_{F_\alpha^q}(T_\varphi S)=\frac{\alpha}{\pi}\int_\mathbb{C}\varphi(z)\widetilde{S}(z)dA(z).
    \end{equation}
\end{lemma}
\begin{proof}
  $T_\varphi\in\mathcal{N}(F_\alpha^p,F_\alpha^q)$ follows from Theorem \ref{th3.4}. Applying the pointwise estimates on the Fock space, we have
  \begin{equation*}
      |SK_z(u)|\leq\|SK_z\|_{\alpha,p}e^{\frac{\alpha}{2}|u|^2}\leq\|S\|e^{\frac{\alpha}{2}|z|^2}e^{\frac{\alpha}{2}|u|^2}.
  \end{equation*}
  Hence
    \begin{align*}
        &\int_\mathbb{C}|\varphi(u)|d\lambda_\alpha(u)\int_\mathbb{C}|K_u(z)||SK_z(u)|d\lambda_{\alpha}(z)\\
        &\leq C\int_\mathbb{C}|\varphi(u)|e^{\frac{\alpha}{2}|u|^2}d\lambda_\alpha(u)\int_\mathbb{C}|K_u(z)|e^{-\frac{\alpha}{2}|z|^2}dA(z)\\
        &=C\int_\mathbb{C}|\varphi(u)|dA(u)\\
        &<\infty.
    \end{align*}
   
Then the conclusion follows from Theorem \ref{th4.2}.
        
\end{proof}

We now present the proof of Theorem \ref{th1.2}:

    \begin{proof}[Proof of Theorem \ref{th1.2}]
        We first prove (2). Assume, for contradiction, that $C$ is not norm-dense in $\mathcal{K}(F_{\alpha}^{p},F_{\alpha}^{q})$. An application of the Hahn-Banach Theorem combined with Proposition \ref{pro2.5} implies the existence of a nonzero operator $S\in\mathcal{N}(F_{\alpha}^{q},F_{\alpha}^{p})$ satisfying

      $$\mathrm{Tr}(T_{\varphi}S)=0\qquad\text{for all }T_{\varphi}\in C.$$
      By Lemma \ref{le4.3}, this trace can be written as

$$\mathrm{Tr}(T_{\varphi}S)=\frac{\alpha}{\pi}\int_{\mathbb{C}}\varphi(z)\,\widetilde{S}(z)\,dA(z).$$
Since the equality holds for every continuous, compactly supported $\varphi$, it follows that $\widetilde{S}(z)=0$. Consequently, $S=0$, which contradicts the fact that $S$ is nonzero. This establishes statement (2).

The proof of (1) follows by a similar argument.
        \end{proof}
\bibliographystyle{abbrv}
\bibliography{ref}

\printaddress

\end{document}